\documentclass[12pt]{amsart}
\usepackage{amsmath}
\usepackage{amssymb}
\usepackage{amsthm}
\usepackage{enumerate}
\usepackage{url}
\usepackage{comment}
\usepackage[margin=1in]{geometry}
\newcommand{\ad}{\mathrm{ad}}
\newcommand{\adstar}{\ad^*}
\newcommand{\Ad}{\mathrm{Ad}}
\newcommand{\Adstar}{\Ad^*}

\newcommand{\diver}{\mathrm{div}}

\newtheorem{thm}{Theorem}[section]
\newtheorem{prop}[thm]{Proposition}
\newtheorem{lem}[thm]{Lemma}

\newtheorem*{definition}{Definition}
\newtheorem{example}[thm]{Example}
\title{Contactomorphisms with $L^2$ metric on stream functions}
\author{Boramey Chhay}
\address{University of Colorado}
\email{boramey.chhay@colorado.edu}

\date{\today}

\begin{document}

\begin{abstract}
Here we investigate some geometric properties of the contactomorphism group $\mathcal{D}_\theta(M)$ of a compact contact manifold with the $L^2$ metric on the stream functions. Viewing this group as a generalization to the $\mathcal{D}(S^1)$, the diffeomorphism group of the circle, we show that its sectional curvature is always non-negative and that the the Riemannian exponential map is not locally $C^1$. Lastly, we show that the quantomorphism group is a totally geodesic submanifold of $\mathcal{D}_\theta(M)$ and talk about its Riemannian submersion onto the symplectomorphism group of the Boothby-Wang quotient of $M$.
\end{abstract}

\maketitle

\section{Introduction}
Let $M$ be an orientable compact, contact manifold (without boundary) of odd dimension $2n+1$. Recall that a manifold $M$ is a \emph{contact manifold} if there exists a $1$-form $\theta$ which satisfies the non-degeneracy condition that $\theta \wedge d\theta^n\neq 0$ everywhere\cite{B}. We call $\theta$ the \emph{contact form}. If we let $\mathcal{D}(M)$ be the group of diffeomorphisms of $M$, we say that $\eta\in\mathcal{D}(M)$ is a \emph{contactomorphism} if $\eta^*\theta$ is some positive functional multiple of $\theta$. We will denote the contactomorphism group by $\mathcal{D}_\theta(M)$. $\mathcal{D}_\theta(M)$ can be thought of as an infinite dimensional Riemannian manifold using the framework of Arnold\cite{A}. 

The diffeomorphism group of the circle, $\mathcal{D}(S^1)$, has been heavily studied and has interesting applications to fluid mechanics. Depending on the metric, some classical PDE arise as the geodesic equation on $\mathcal{D}(S^1)$ such as the right-invariant Burgers' equation and the Camassa-Holm equation. It was shown that the Riemannian exponential map is not a local $C^1$ map for the $L^2$ metric\cite{CK}. This is not the case when they considered the $H^1$ metric. Later it was shown that $\mathcal{D}(S^1)$ has vanishing geodesic distance for the $H^s$ metric if and only if $s\leq 1/2$ \cite{BBHM,MM}.

The contactomorphism group has been studied before but in many different contexts. Smolentsev \cite{S1,S2} worked with the quantomorphism group $\mathcal{D}_q(M)$, which is the group of diffeomorphisms which exactly preserve the contact form, with the bi-invariant $L^2$ metric on stream functions. In \cite{CP,EP}, $\mathcal{D}_\theta(M)$ was studied with the $L^2$ metric on velocity fields which in turn becomes the full $H^1$ metric on stream functions. In this paper, we consider $\mathcal{D}_\theta(M)$ with the $L^2$ metric on its stream functions. The geometric differences of $\mathcal{D}_\theta(M)$ with these two metrics are apparent just as in the case of $\mathcal{D}(S^1)$. As $\mathcal{D}(S^1)$ coincides with $\mathcal{D}_\theta(S^1)$ trivially, we view $\mathcal{D}_\theta (M)$ as a natural generalization to $\mathcal{D}(S^1)$. In \cite{SH}, Shelukhin considers the $L^\infty$ norm on the contactomorphisms isotopic to the identity and shows how that induces a bi-invariant distance function on the full $\mathcal{D}_\theta(M)$.

We summarize the results of this paper as follows. First we show that $\mathcal{D}_\theta(M)$ has non-negative sectional curvature. Next we prove that the Riemannian exponential map is not a local $C^1$ map. Lastly, we  show that the quantomorphism group $\mathcal{D}_q(M)$ is a totally geodesic submanifold of $\mathcal{D}_\theta(M)$.

I would like to thank Martin Bauer for suggesting this idea and also my advisor Stephen C. Preston for the helpful discussion. I gratefully acknowledge the support of ESI in Vienna and the Simons Foundation Collaboration Grant \#318969.

\section{Geometric Background}
We will be working primarily on the Lie algebra of $\mathcal{D}_\theta(M)$, and we will use the following well-known
fact that the Lie algebra  $T_e\mathcal{D}_{\theta}(M)$ can be identified with the space of smooth functions $f\colon M\to \mathbb{R}$.

\begin{prop}\cite{EP} The Lie algebra $T_e\mathcal{D}_\theta(M)$ consists
of vector fields $u$ such that $\mathcal{L}_u\theta=\lambda \theta$ for some function
$\lambda:M\rightarrow \mathbb{R}$. Any such field is uniquely determined by the function
$f=\theta(u)$, and we write $u=S_\theta f$. Thus we have that
\[T_e\mathcal{D}_\theta(M)=\{S_\theta f:f\in C^\infty(M)\}.\]
\end{prop}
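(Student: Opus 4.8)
The plan is to identify $T_e\mathcal{D}_\theta(M)$ with the space of \emph{infinitesimal contactomorphisms} and then to invert the correspondence $u\mapsto\theta(u)$ explicitly by means of the Reeb field. First I would justify the characterization $\mathcal{L}_u\theta=\lambda\theta$. If $\eta_t$ is a smooth path in $\mathcal{D}_\theta(M)$ with $\eta_0=\mathrm{id}$ and $u=\dot\eta_0$, then $\eta_t^*\theta=\lambda_t\theta$ for positive functions $\lambda_t$ with $\lambda_0=1$, and differentiating at $t=0$ gives $\mathcal{L}_u\theta=\dot\lambda_0\,\theta$, so every tangent vector has the asserted form; conversely, the flow of any $u$ with $\mathcal{L}_u\theta=\lambda\theta$ preserves the conformal class of $\theta$, hence lies in $\mathcal{D}_\theta(M)$.

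The key tool is the Reeb field $R$, the unique vector field that the non-degeneracy condition $\theta\wedge(d\theta)^n\neq0$ singles out via $\theta(R)=1$ and $\iota_R d\theta=0$. Setting $f=\theta(u)$ and applying Cartan's formula,
\[
\mathcal{L}_u\theta=\iota_u\,d\theta+d(\iota_u\theta)=\iota_u\,d\theta+df,
\]
so the defining relation becomes $\iota_u\,d\theta+df=\lambda\theta$. Evaluating this $1$-form identity on $R$ and using $\iota_R d\theta=0$ together with $\theta(R)=1$ forces $\lambda=R(f)$; in particular the conformal factor is already determined by $f$ alone.

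Next I would recover $u$ itself. Decompose $u=fR+w$; since $\theta(R)=1$, the requirement $\theta(u)=f$ is equivalent to $\theta(w)=0$, i.e.\ $w\in\ker\theta$. Substituting into $\iota_u\,d\theta=R(f)\theta-df$ and using $\iota_R d\theta=0$ reduces the problem to solving $\iota_w\,d\theta=R(f)\theta-df$ with $w\in\ker\theta$. The right-hand side annihilates $R$, since $(R(f)\theta-df)(R)=R(f)-R(f)=0$, so restricting both sides to the contact distribution and invoking the non-degeneracy of $d\theta$ there yields a unique $w\in\ker\theta$. One then checks directly that the resulting $u=S_\theta f:=fR+w$ satisfies $\mathcal{L}_u\theta=(R(f)\theta-df)+df=R(f)\theta$, closing the loop.

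The main obstacle, and the crux of the argument, is precisely this fiberwise inversion: one must verify that $d\theta$ restricts to a non-degenerate (hence symplectic) form on the $2n$-dimensional bundle $\ker\theta$ — which is exactly the content of the contact condition $\theta\wedge(d\theta)^n\neq0$ — and that the compatibility condition (the source $1$-form annihilating $R$) holds automatically, so that the linear system $w\mapsto\iota_w d\theta|_{\ker\theta}$ is solvable at each point. Granting this, the map $w\mapsto\iota_w d\theta|_{\ker\theta}$ is a smooth bundle isomorphism, whose smooth inverse produces a smooth $w$, so $f\mapsto S_\theta f$ is a well-defined linear map with inverse $u\mapsto\theta(u)$. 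This establishes the bijection and hence the identification $T_e\mathcal{D}_\theta(M)=\{S_\theta f:f\in C^\infty(M)\}$.
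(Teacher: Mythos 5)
Your proof is correct, and it follows essentially the same route as the standard argument in the cited reference \cite{EP} (the paper itself states this proposition without proof): Cartan's formula, evaluation on the Reeb field to force $\lambda=E(f)$, and the decomposition $u=f E+w$ with $w\in\ker\theta$ recovered uniquely via the non-degeneracy of $d\theta$ on the contact distribution. You also correctly flag the two points that make the inversion work — that $\iota_w d\theta$ and $E(f)\theta-df$ both annihilate the Reeb field, and that $d\theta\vert_{\ker\theta}$ is fiberwise symplectic by the contact condition — so there is nothing to add.
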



Here we call $S_\theta$ the contact operator. The Lie bracket on $T_e\mathcal{D}_\theta(M)$ is given by
\begin{equation}\label{contactbracket}
[S_\theta f,S_\theta g]=S_\theta \{f,g\},\text{    where   }\{f,g\}=S_\theta f(g)-gE(f);
\end{equation}
here $E$ denotes the Reeb vector field, uniquely specified by the conditions $\theta(E)=1$, $\iota_Ed\theta=0$.
We call $\{\cdot,\cdot\}$ the ``contact Poisson bracket''; it is not a true Poisson bracket
since it does not satisfy Leibniz's rule. 

We also need a Riemannian structure on $(M,\theta)$, and we will require
that the Riemannian metric be \emph{associated} to the contact form. It will also be convenient to assume that $E$ is a Killing field (i.e., its flow consists of isometries).

\begin{definition}
If $(M,\theta)$ is a contact manifold and $E$ is the Reeb field, a Riemannian metric $(\cdot,\cdot)_g$ is \emph{associated}
if it satisfies the following conditions:
\begin{enumerate}
\item $\theta(u)=(u,E)_g$ for all $u\in TM$, and
\item there exists a $(1,1)$-tensor field $\phi$ such that $\phi^2(u)=-u+\theta(u)E$
and $d\theta(u,v)=(u,\phi v)_g$ for all $u$ and $v$.
\end{enumerate}
If in addition $E$ is a Killing field, we say that that $(M,\theta,g)$ is \emph{$K$-contact}.
\end{definition}

Now if we have a $K$-contact manifold $(M,\theta,g)$, we define a right-invariant metric $\langle\cdot,\cdot\rangle$ on $\mathcal{D}_\theta(M)$ by 
\begin{equation}\label{metric}\langle S_\theta f,S_\theta g\rangle=\int_M fgd\mu\end{equation}

\begin{lem} With $X=S_\theta f$ and $Y=S_\theta g$ we have that 
\[\ad^*_XY=S_\theta[S_\theta f(g)+g(n+2)E(f)]\]

\end{lem}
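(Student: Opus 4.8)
The plan is to compute $\ad^*_X Y$ directly from the defining property of the coadjoint operator for the right-invariant metric \eqref{metric}. Introducing a third Lie-algebra element $Z = S_\theta h$, and using that for a right-invariant metric $\ad_X Z = -[X,Z] = -S_\theta\{f,h\}$ (the sign being the standard one for right-invariant fields, via \eqref{contactbracket}), the operator is characterized by $\langle \ad^*_X Y, Z\rangle = \langle Y, \ad_X Z\rangle$ for all $h$. Since $S_\theta$ is injective on stream functions, it suffices to produce the single function $k$ with $\ad^*_X Y = S_\theta k$, that is, to find $k$ satisfying $\int_M k\,h\,d\mu = -\int_M g\,\{f,h\}\,d\mu$ for every $h \in C^\infty(M)$.

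First I would expand the right-hand side using the contact Poisson bracket \eqref{contactbracket}:
\[-\int_M g\,\{f,h\}\,d\mu = -\int_M g\,S_\theta f(h)\,d\mu + \int_M g h\,E(f)\,d\mu.\]
The second integral already pairs against $h$ with coefficient $g\,E(f)$ and so contributes directly to $k$. The first integral carries a derivative on the test function $h$, so the next step is to integrate it by parts. On the closed manifold $M$ the divergence theorem gives $\int_M \diver(g h\,S_\theta f)\,d\mu = 0$; expanding the integrand as $h\,S_\theta f(g) + g\,S_\theta f(h) + gh\,\diver(S_\theta f)$ transfers the derivative off $h$ and replaces the first integral by $\int_M h\big(S_\theta f(g) + g\,\diver(S_\theta f)\big)\,d\mu$.

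The only genuinely contact-geometric step --- and the one I expect to be the crux --- is the evaluation of $\diver(S_\theta f)$. Here I would use that the Riemannian volume form of an associated metric is a constant multiple of $\theta \wedge (d\theta)^n$, so that $\diver(S_\theta f)$ can be read off from $\Lie_{S_\theta f}\big(\theta\wedge(d\theta)^n\big) = \diver(S_\theta f)\,\theta\wedge(d\theta)^n$. The supporting computation is that $\Lie_{S_\theta f}\theta = E(f)\,\theta$: writing $\Lie_{S_\theta f}\theta = df + \iota_{S_\theta f}d\theta$, knowing the result is a multiple of $\theta$ by definition of a contact field, and contracting with $E$ (using $\theta(E)=1$, $df(E)=E(f)$, and $\iota_E d\theta = 0$) identifies the multiplier as $E(f)$. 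Feeding this into the Leibniz expansion of $\Lie_{S_\theta f}$ across $\theta\wedge(d\theta)^n$, and discarding the terms containing $d(E(f))\wedge\theta\wedge\theta = 0$, yields $\diver(S_\theta f) = (n+1)E(f)$; this is where the dimension $2n+1$ of $M$ enters.

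Finally I would assemble the pieces. The integration by parts contributes the coefficient $S_\theta f(g) + g(n+1)E(f)$, and the remaining integral contributes $g\,E(f)$, so that $\int_M k\,h\,d\mu = \int_M h\big(S_\theta f(g) + g(n+2)E(f)\big)\,d\mu$ for all $h$. Since this holds for every test function $h$, we conclude $k = S_\theta f(g) + g(n+2)E(f)$, that is, $\ad^*_X Y = S_\theta[S_\theta f(g) + g(n+2)E(f)]$, as claimed.
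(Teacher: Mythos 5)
Your proof is correct and takes essentially the same route as the paper: pair $\adstar_X Y$ against a test element $Z = S_\theta h$ via $\langle \adstar_X Y, Z\rangle = \langle Y, \ad_X Z\rangle$, expand the contact Poisson bracket, and integrate by parts on the closed manifold. The only difference is that you explicitly justify $\diver(S_\theta f) = (n+1)E(f)$ through $\Lie_{S_\theta f}\bigl(\theta\wedge(d\theta)^n\bigr)$, a fact the paper uses without comment.
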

\begin{proof} Let $X=S_\theta f$, $Y=S_\theta g$, and $Z=S_\theta h$ so we have
\begin{multline}
\langle \adstar_XY,Z\rangle= \langle ad^*_{S_\theta f}S_\theta g, S_\theta h\rangle =\langle S_\theta g,\ad_{S_\theta f}S_\theta h\rangle=-\int_M gS_\theta f(h)d\mu+\int_M gE(f)hd\mu=\\\int_MhS_\theta f(g)+hg\diver S_\theta f +hgE(f) d\mu =\int_M (S_\theta f(g)+g(n+1)E(f)+gE(f))hd\mu. \end{multline}
Thus we have that 
\begin{equation}\label{adstar}\ad^*_XY=S_\theta[S_\theta f(g)+g(n+2)E(f)]\end{equation}

\end{proof}

On any Lie group with a right-invariant Riemannian metric, the geodesic equation\cite{A} can be written in terms of the flow equation 
\[\frac{d\eta}{dt}=u\circ \eta\]
and the Euler-Arnold equation
\[\frac{du}{dt}+\adstar_uu=0.\]
 In this case, the Euler-Arnold equation becomes
\[\tfrac{df}{dt}+f(n+3)E(f)=0.\]

\begin{example}
For $M=S^1$ with the coordinate being $\alpha$ and the standard $1-$form being $d\alpha$ we get that the Reeb field is $E=\tfrac{d}{d\alpha}$ and the contact operator is $S_\theta f=fE$. Thus the geodesic equation on the circle becomes 
\[\tfrac{df}{dt}+3ff_\alpha=0.\]
This is the right-invariant Burgers' equation which is studied in \cite{CK}. It is usual Euler-Arnold equation on $\mathcal{D}(S^1)$, the diffeomorphism group of the circle.
\end{example}

In the above example of the circle, the diffeomorphism group, which is studied in \cite{CK, MM}, coincides with the contactomorphism group. As we will see later in this paper, the contactomorphism group shares many properties with the diffeomorphism group of the circle with the $L^2$ right-invariant metric. Thus, we view $D_\theta(M)$ as a generalization of $\mathcal{D}(S^1)$.

\section{The Curvature}
In \cite{KM} it was shown that the contactomorphism group is a regular smooth Lie group. The curvature of a Lie group $G$ with right-invariant metric
in the section determined by a pair of vectors $X,Y$ in the Lie algebra $\mathfrak{g}$ is given by the following formula\cite{AK}.

\begin{equation}\label{arnoldformula}
C(X,Y)=\langle d,d\rangle+2\langle a,b \rangle-3\langle a,a\rangle-4\langle B_X,B_Y\rangle
\end{equation} where
\begin{multline*}
2d=B(X,Y)+B(Y,X), \qquad 2b=B(X,Y)-B(Y,X), \\
2a=\ad_XY,\qquad 2B_X=B(X,X),\qquad 2B_Y=B(Y,Y),
\end{multline*}
where $B$ is the
bilinear operator on $\mathfrak{g}$ given by the relation $\langle B(X,Y),Z\rangle=\langle X,\ad_YZ\rangle$, i.e.,
$B(X,Y) = \adstar_YX$.
Note that in terms of the usual Lie bracket of vector fields, we have $\ad_XY = -[X,Y]$. The sectional curvature is then given by the normalization $K(X,Y)=C(X,Y)/\vert X\wedge Y\vert^2$. But here we only care about the sign so we will work with $C$ only. 

Next we will show that the sectional curvature will always be non-negative.

\begin{thm}
The sectional curvature is nonnegative.
\end{thm}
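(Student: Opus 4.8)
The plan is to work entirely at the Lie-algebra level, where the metric \eqref{metric} turns every inner product appearing in Arnold's formula \eqref{arnoldformula} into an honest $L^2$ integral of functions. First I would compute the five ingredients $a,b,d,B_X,B_Y$ as stream functions, using the bracket \eqref{contactbracket} and the coadjoint formula \eqref{adstar}. Writing $X=S_\theta f$, $Y=S_\theta g$ and recalling $B(X,Y)=\adstar_Y X$, this gives $2a = \ad_XY = -S_\theta\{f,g\}$, together with $2d=S_\theta\big[S_\theta f(g)+S_\theta g(f)+(n+2)(fE(g)+gE(f))\big]$ and $2b=S_\theta\big[S_\theta g(f)-S_\theta f(g)+(n+2)(fE(g)-gE(f))\big]$. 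Two structural simplifications are the key inputs. Since $\{f,f\}=0$ (the contact bracket is a genuine Lie bracket, so $S_\theta$ injective forces antisymmetry), we get $S_\theta f(f)=fE(f)$, hence $2B_X=\adstar_XX=S_\theta\big[(n+3)fE(f)\big]$ and similarly $2B_Y=S_\theta\big[(n+3)gE(g)\big]$. The same antisymmetry yields the pointwise identity $S_\theta f(g)+S_\theta g(f)=E(fg)$, which is what will ultimately force cancellation.

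Next I would substitute into \eqref{arnoldformula}. Introducing the shorthand $p=S_\theta f(g)$, $q=S_\theta g(f)$, $r=fE(g)$, $s=gE(f)$, every term becomes an $L^2$ integral of a quadratic polynomial in $p,q,r,s$; in particular $-4\langle B_X,B_Y\rangle=-(n+3)^2\int_M rs\,d\mu$ after noting $rs=fg\,E(f)E(g)$. Expanding $\langle d,d\rangle+2\langle a,b\rangle-3\langle a,a\rangle$ pointwise, the $p^2$ contributions cancel identically, and combining with the $B_X,B_Y$ term leaves, with $k=n+2$,
\[
4\,C(X,Y)=\int_M\Big[q^2+k^2r^2+(k-3)(k+1)s^2+4(k+1)ps+2k\,qr+2(k+1)qs-2(k+1)(k+2)rs\Big]\,d\mu .
\]
Now I invoke the antisymmetry identity in the form $p+q=r+s$: setting $h=p-r=s-q$ and eliminating $p,q$, the integrand collapses to a single perfect square, and I expect to obtain
\[
C(X,Y)=\tfrac14\int_M\Big[\big(S_\theta f(g)-fE(g)\big)+(n+2)\big(gE(f)-fE(g)\big)\Big]^2 d\mu\ \ge\ 0 .
\]
This is manifestly symmetric under $f\leftrightarrow g$ (the bracketed quantity merely changes sign), as a sectional curvature must be.

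The step I would expect to be the main obstacle is the bookkeeping that reveals the perfect square: a priori the expanded integrand is not symmetric in $f,g$, and my first instinct would be to reach for integration by parts via the $K$-contact relations $\diver S_\theta f=(n+1)E(f)$ and $\diver E=0$ to reconcile the two orderings. The pleasant surprise of this route is that the pointwise identity $p+q=r+s$ does all of that work algebraically, so no integration by parts is needed at all. As a sanity check, on $M=S^1$ one has $n=0$, no horizontal part (so $h=0$ and $p=r$, $q=s$), and the formula reduces to $C(X,Y)=\int_{S^1}\big(gf'-fg'\big)^2\,d\alpha=\|[X,Y]\|^2$, recovering the non-negativity of the $L^2$ (Burgers) geometry on $\mathcal{D}(S^1)$ from \cite{CK}.
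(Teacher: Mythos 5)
Your proposal is correct and takes essentially the same route as the paper: both insert the coadjoint formula \eqref{adstar} into Arnold's curvature formula \eqref{arnoldformula} and use the antisymmetry of the contact bracket (equivalently $S_\theta f(g)+S_\theta g(f)=fE(g)+gE(f)$) to collapse the quadratic form into a single perfect square. Your final expression $\tfrac14\int_M\big[\big(S_\theta f(g)-fE(g)\big)+(n+2)\big(gE(f)-fE(g)\big)\big]^2\,d\mu$ is algebraically identical to the paper's $\tfrac14\int_M\big[\{f,g\}-(n+3)\big(fE(g)-gE(f)\big)\big]^2\,d\mu$; the only difference is bookkeeping, since you expand fully in $p,q,r,s$ and eliminate via $h=p-r=s-q$, whereas the paper substitutes $\{f,g\}$ early and finishes with $(r+s)^2-4rs=(r-s)^2$.
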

\begin{proof} 









With $X=S_\theta f$ and $Y=S_\theta g$, we use the above formula and \eqref{adstar} to compute  
\begin{align*}
C(X,Y)
&=\tfrac{1}{4}\int_M[(n+3)(fE(g)+gE(f))]^2\\
&\hspace{2 cm}-2[(\{f,g\})(S_\theta g(f)-S_\theta f(g)+(n+2)(fE(g)-gE(f))]\\
&\hspace{4 cm}-3[\{f,g\}^2]-4[(n+3)^2fE(f)gE(g)] d\mu\\
&=\tfrac{1}{4}\int_M [(n+3)(fE(g)+gE(f))]^2\\
&\hspace{2 cm}-2(n+3)\{f,g\}(fE(g)-gE(f))+\{f,g\}^2\\&\hspace{4 cm}-4[(n+3)^2fE(f)gE(g)] d\mu\\
\end{align*}
because $S_\theta g(f)-S_\theta f(g)+fE(g)-gE(f)=2\{f,g\}$ by antisymmetry of the contact Poisson bracket.
Now since \[(fE(g)+gE(f))^2-4fE(f)gE(g)=(fE(g)-gE(f))^2,\]  we have that the non-normalized sectional curvature is given by
\[C(X,Y)=\tfrac{1}{4}\int_M[\{f,g\}-(n+3)(fE(g)-gE(f))]^2d\mu.\]

\end{proof}

Here we can immediately see how the geometry of $\mathcal{D}_\theta(M)$ changes when we consider the $L^2$ metric on stream functions rather than the $H^1$ metric where it was shown in \cite{CP} that the curvature can take on any sign.

\section{Geodesics}

From \eqref{adstar} we have that the flow equation and Euler-Arnold equation are given by 
\begin{align}&\tfrac{\partial\varphi}{\partial t}=u\circ \varphi\\ &\tfrac{\partial f}{\partial t}+3fE(f)=0\end{align}where $u=S_\theta f$.

Let $(x,z)=(x_1,\ldots,x_n,y_1,\ldots,y_n,z)$ be Darboux coordinates for our contact manifold and thus the contact form is given by 
\[\alpha=dz-\sum y_idx_i\] and the Reeb field is given by 
\[E=\tfrac{\partial}{\partial z}.\]
Now given an initial condition, $f_0$, we are able to solve this first order PDE implicitly in these coordinates to get  \[f(t,x,z-3tf_0(x,z))=f_0(x,z).\]
Note that this solution does not describe trajectories.

In \cite{SH}, it was shown that given the metric \eqref{metric}, the energy functional is in fact degenerate. Thus, just as in the case of the diffeomorphism group of the circle with the $L^2$ metric\cite{MM}, $\mathcal{D}_\theta(M)$ has vanishing geodesic distance.

\section{The Exponential Map}
Let $\varphi(t;v)$  be the geodesic starting at the identity and in the direction of $v$. Recall that the exponential map $\exp_p$ on a Riemannian manifold $M$ at a point $p\in M$ is defined by the geodesic flow at time $1$. Explicitly, it is defined as $\exp_p(v)=\varphi(1,v)$. Next we will show that as with the case of the diffeomorphism group of the circle\cite{CK}; $\mathcal{D}_\theta(M)$ with the $L^2$ metric on stream functions has an exponential map which is not locally $C^1$.

\begin{thm}
The Riemannian exponential map of the $L^2$ right invariant metric on stream functions of $\mathcal{D}_\theta(M)$ is not a $C^1$ map from a neighborhood of zero in $T_e\mathcal{D}_\theta(M)$ to $\mathcal{D}_\theta(M)$.
\end{thm}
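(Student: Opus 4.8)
The plan is to follow the strategy of Constantin--Kolev \cite{CK} for $\mathcal{D}(S^1)$ and adapt it to the contactomorphism setting. The standard criterion is this: if the exponential map were $C^1$ near zero, then its derivative $d(\exp_e)_{tv_0}$ would depend continuously on $t$ and would be a Fredholm-type family of operators whose invertibility could not jump; in particular one shows that $d(\exp_e)_0 = \mathrm{id}$ while for certain directions $v_0$ and certain small times the differential $d(\exp_e)_{v_0}$ fails to be a linear isomorphism (it has nontrivial kernel or fails to be onto) yet agrees with a limit that would force $C^1$-ness to break. Concretely, I would first write the Jacobi equation along the geodesic $\varphi(t;v_0)$ determined by the Euler--Arnold flow $\partial_t f + 3fE(f)=0$, linearize the flow-and-Euler-Arnold system in the initial stream function $f_0$, and exhibit the derivative of the exponential map as the time-$1$ solution operator of this linearization.

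The key computational step is to exploit the explicit implicit solution already recorded in the Geodesics section, namely $f(t,x,z-3tf_0(x,z)) = f_0(x,z)$ in Darboux coordinates. Differentiating this relation with respect to the initial data $f_0$ in a one-parameter family $f_0 + \epsilon w$ yields an explicit formula for the differential of the flow map in the $E$-direction, and the obstruction to invertibility will come from the appearance of a factor like $1 - 3t\,E(f_0)$ (the Jacobian of the characteristic map $z \mapsto z - 3tf_0$) in the denominator. I would choose a test direction $f_0$ supported in the Reeb direction — mirroring the Burgers case on $S^1$, where $v_0$ is a single Fourier mode — so that the whole problem reduces essentially to the one-dimensional Burgers computation of \cite{CK} fibered over the symplectic $(x,y)$ Darboux variables. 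The Reeb field being Killing guarantees the $L^2$ structure is preserved under this reduction, so the estimates transfer.

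The heart of the argument, as in \cite{CK}, is to show that $d(\exp_e)_{tv_0}$ fails to be bounded invertible (or fails to be surjective onto a closed subspace) for a sequence of times $t_k \to t_*$ accumulating at the first conjugate-type point, while $C^1$-ness of $\exp_e$ would force the family $t \mapsto d(\exp_e)_{tv_0}$ to be continuous in the operator norm and hence preserve invertibility on a neighborhood; the contradiction is produced by constructing explicit high-frequency directions $w_k$ (in the symplectic variables, oscillating faster and faster) for which $\| d(\exp_e)_{tv_0} w_k \| / \|w_k\|$ does not behave continuously in $t$. I expect the main obstacle to be the bookkeeping of the extra $(n+2)$- and $(n+3)$-type factors coming from the divergence of $S_\theta f$ and from $\adstar$ in \eqref{adstar}: unlike the pure circle case these constants multiply the $E(f)$ terms and must be tracked carefully so that the degeneration of the Jacobian factor $1 - 3t\,E(f_0)$ is genuinely the source of the loss of $C^1$ regularity and is not accidentally cancelled. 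Once one verifies that the reduction to characteristics along the Reeb direction isolates exactly the Burgers obstruction, the failure of local $C^1$-ness follows by the same Baire-category/continuity argument as in \cite{CK}.
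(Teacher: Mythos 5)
Your opening moves line up with the paper's: reduce to the Jacobi (linearized) equation along a geodesic, realize $D\exp$ as the time-$1$ solution operator of that linearization, and use $D\exp(0)=\mathrm{id}$. But the mechanism you select for the failure of invertibility is the wrong one, and it cannot yield a contradiction \emph{in a neighborhood of zero}. You locate the obstruction in the degeneration of the characteristic Jacobian $1-3t\,E(f_0)$ for a nonconstant $f_0$; that degeneration occurs at the blow-up (shock) time of the geodesic itself, where $\exp$ ceases to be defined, so there is no differential there to fail to be invertible, and for $t$ strictly below blow-up the Jacobian factor is bounded away from zero. Worse, the bad set cannot be pushed toward the origin: by the homogeneity $\exp(tv_0)=\varphi(1;tv_0)=\varphi(t;v_0)$, replacing $f_0$ by $\epsilon f_0$ rescales the shock time to $t_*/\epsilon$, so your problematic points $t v_0$ with $t\to t_*$ always accumulate at the \emph{fixed} point $t_*v_0\neq 0$, never at $0$. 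To contradict the inverse function theorem given $D\exp(0)=\mathrm{id}$, you need singular differentials at a sequence of points tending to zero, and your construction does not produce one; the ``Fredholm family/Baire category'' framing is likewise not needed and does not repair this.

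The paper's proof --- and in fact what Constantin--Kolev actually do on $S^1$ --- uses precisely the case your mechanism excludes: \emph{constant} stream functions $f_0=c$, for which $E(f_0)=0$, the Jacobian factor is identically $1$, and the geodesic is the globally defined Reeb flow, a flow of isometries since $(M,\theta,g)$ is $K$-contact (this is what makes $\Adstar_\eta\Ad_\eta=\mathrm{id}$ and renders the Jacobi equation explicitly solvable). Along these steady geodesics the Jacobi equation reduces to the constant-coefficient transport equation $w_t+c(n+2)w_z=0$ in Darboux coordinates, and the failure of invertibility comes from a \emph{resonance}, not a shock: for $c_m=\tfrac1m$ the initial datum $w_0=\sin\bigl(\tfrac{2\pi m}{n+2}z\bigr)$ produces a Jacobi field vanishing at time $1$, so $D\exp(c_mE)$ has nontrivial kernel while $c_mE\to 0$, and the inverse function theorem finishes. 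So the missing idea in your proposal is the accumulation of conjugate-point-type singularities of $D\exp$ at the origin along the family of global Reeb geodesics; the shock obstruction you propose, whatever care you take with the $(n+2)$ and $(n+3)$ bookkeeping, lives at a fixed distance from zero and proves nothing about local $C^1$ regularity at the identity.
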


\begin{proof}
Let's assume for a contradiction that $\exp$ is a $C^1$ map. 

Consider the curve given by $t\mapsto tu_0$ with $t>0$ and $u_0\in T_e\mathcal{D}_\theta(M)$. For $t$ small enough we have that $\exp(tu_0)=\varphi(1;tu_0)=\varphi(t;u_0)$ we compute
\[\left.\frac{d}{dt}\exp(tu_0)\right\vert_{t=0}=\left.\frac{d}{dt}\varphi(t;u_0)\right\vert_{t=0}=u_0\]
so we have that $D\exp(0)$ is the identity.

Now we would like to show that $\exp$ is not invertible in a neighborhood of $u_0\in T_e\mathcal{D}_\theta(M)$ so we consider the Jacobi fields. Let $\eta(t)$ be a smooth geodesic with $\eta(0)=e$ and $\dot{\eta}(0)=u_0$ so that every Jacobi field satisfies 
\[\tfrac{\partial}{\partial t}\left(\Adstar_\eta\Ad_\eta\tfrac{\partial v}{\partial t}\right) +\adstar_{\frac{\partial v}{\partial t}}u_0=0\]
with $J(t)=dL_\eta v$. This equation is obtained by left translating the Jacobi equation\cite{EMP, MP}. Now since $E$ is a steady state solution to the Euler-Arnold equation, we have that its flow is geodesic in $\mathcal{D}_\theta(M)$, and since $E$ is a Killing field, we have that $\eta(t)$ is an isometry of $M$. Thus $\Adstar_\eta\Ad_\eta$ is the identity for all time. Now with $v=S_\theta g$ and $u_0=S_\theta f_0$ for $g,f_0\in C^\infty (M)$ and setting $f_0=c>0$, we can rewrite the Jacobi equation as 
\[\tfrac{\partial^2g}{\partial t}+c(n+2)E(\tfrac{\partial g}{\partial t})=0.\]

We set $w=\frac{\partial g}{\partial t}$ with initial condition $w_0$ and locally, in Darboux coordinates $(x,z)$ we have that the above equation becomes 
\[\tfrac{\partial w}{\partial t}+c(n+2)\tfrac{\partial w}{\partial z}=0\]
thus solving for $g$ we get 
\[g(x,z,t)=\tfrac{1}{(c(n+2))^2}\int_z^{z-c(n+2)t}w_0(x,s)ds.\]

So letting $c_m=\frac{1}{m}$, we have that $w_0=\sin\left(\frac{2\pi m}{n+2}z\right)$ gets annihilated at the points $S_\theta c_m=c_m E$. Thus we have that the $D\exp(c_m E)$ fails to be invertible at points near zero. That is because $c_m$ is a sequence going to zero so in any topology, $c_m E$ also approaches zero. This violates the Inverse Function Theorem which gives us our desired contradiction.
\end{proof}

\section{The Quantomorphism Group}
In this section we will be considering the group of quantomorphisms. That is, the contactomorphisms which exactly preserve the contact form, not just the structure. This can be written as 
\[\mathcal{D}_q(M)=\{\eta\in\mathcal{D}_\theta(M):\eta^*\theta=\theta\}.\]

A contact form is said to be \emph{regular} if the Reeb field induces a free action of the unit circle on $M$. If a contact form is regular, we are able to define the Boothby-Wang quotient\cite{B} manifold $M/S^1=N$ and the $2$-form $d\theta$ can be then used to define a symplectic structure $\omega$ on $N$ by 

\[\pi^*\omega=d\theta,\]
where $\pi:M\rightarrow N$.
\begin{thm}
If $(M,\theta,g)$ is a $K$-contact manifold with Reeb field $E$. If $\theta$ is a regular contact form, then $D_q(M)$ is a closed and totally geodesic submanifold of $D_\theta(M)$.
\end{thm}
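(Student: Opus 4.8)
The plan is to first compute the Lie algebra $\mathfrak{h}:=T_e\mathcal{D}_q(M)$, then dispose of closedness and the submanifold structure, and finally prove total geodesy by showing that the Euler-Arnold flow preserves $\mathfrak{h}$.

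Since the defining condition $\eta^*\theta=\theta$ linearizes to $\mathcal{L}_u\theta=0$, and since $u=S_\theta f$ obeys $\mathcal{L}_{S_\theta f}\theta=E(f)\theta$ (equivalently $\diver S_\theta f=(n+1)E(f)$, as in the Lemma), I would identify
\[\mathfrak{h}=\{S_\theta f : E(f)=0\},\]
the stream functions invariant under the Reeb flow; by regularity of $\theta$ these are precisely the pullbacks $\pi^*C^\infty(N)$. That $\mathfrak{h}$ is a subalgebra follows from \eqref{contactbracket}: writing $E=S_\theta 1$, one has $[S_\theta f,E]=S_\theta\{f,1\}=-S_\theta E(f)$, so any $S_\theta f$ with $E(f)=0$ commutes with $E$, whence $E(\{f,g\})=E\big(S_\theta f(g)\big)=S_\theta f\big(E(g)\big)=0$ when $E(f)=E(g)=0$.

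For the remaining structural claims, $\mathcal{D}_q(M)$ is the level set on which the conformal factor of $\eta^*\theta$ is identically $1$, so it is closed in $\mathcal{D}_\theta(M)$. To see it is an embedded submanifold I would use regularity in an essential way: the Reeb flow is a free $S^1$-action, and averaging a function over its orbits defines a continuous projection onto $\ker E=\mathfrak{h}$, giving a closed, complemented splitting $C^\infty(M)=\mathfrak{h}\oplus\operatorname{im}E$. Combined with the regular-Lie-group structure of $\mathcal{D}_\theta(M)$ from \cite{KM} and the fact that $\mathcal{D}_q(M)$ is a closed subgroup with Lie algebra $\mathfrak{h}$, this yields submanifold charts. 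I expect this promotion of $\mathcal{D}_q(M)$ from an immersed subgroup to a genuine embedded submanifold to be the main obstacle, and the step where the regularity of $\theta$ (and the $K$-contact hypothesis, which keeps the Reeb flow isometric) is really used.

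The total-geodesy step is then short. As \eqref{metric} is right-invariant, each $R_\eta$ with $\eta\in\mathcal{D}_q(M)$ is an isometry carrying $\mathcal{D}_q(M)$ to itself, so it suffices to treat geodesics issuing from $e$ with velocity $S_\theta f_0\in\mathfrak{h}$. Setting $g=f$ in \eqref{adstar} and using $\{f,f\}=0$ gives $\adstar_{S_\theta f}S_\theta f=S_\theta[(n+3)fE(f)]$, so the Euler-Arnold equation reads $\partial_t f+(n+3)fE(f)=0$. If $E(f_0)=0$ then the constant-in-time $f(t)\equiv f_0$ solves it, so the geodesic through $e$ with velocity $S_\theta f_0$ is exactly the flow of the fixed field $S_\theta f_0$; since $\mathcal{L}_{S_\theta f_0}\theta=E(f_0)\theta=0$ that flow preserves $\theta$ and hence stays in $\mathcal{D}_q(M)$. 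Equivalently $\adstar_u u=0$ for all $u\in\mathfrak{h}$, so the geodesic spray is tangent to $\mathfrak{h}$ and, by polarization of the symmetric second fundamental form, $\mathcal{D}_q(M)$ is totally geodesic. This also matches Smolentsev's result that the induced $L^2$ metric on $\mathcal{D}_q(M)$ is bi-invariant, with geodesics the one-parameter subgroups.
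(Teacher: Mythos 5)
Your proposal is correct, but the mechanism of your total-geodesy argument genuinely differs from the paper's. The paper verifies that the second fundamental form vanishes at the identity: taking $u=S_\theta f$ tangent and $v=S_\theta g$ orthogonal to $T_e\mathcal{D}_q(M)$, it computes $\langle\nabla_uu,v\rangle=\langle u,\ad_uv\rangle=-\int_M f\{f,g\}\,d\mu$ and kills this by integration by parts using $\diver S_\theta f=(n+1)E(f)$ together with $E(f)=0$ (in fact its computation shows the pairing vanishes for \emph{all} $v$, so orthogonality is never really used, and there is a harmless typo: the penultimate integrand should read $S_\theta f(f)=fE(f)$ rather than $E(f)$, both of which vanish here). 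You instead read off directly from \eqref{adstar} that $\adstar_{S_\theta f}S_\theta f=S_\theta[(n+3)fE(f)]$ vanishes identically on the subalgebra $\{E(f)=0\}$, so Euler--Arnold solutions with such data are stationary, the corresponding geodesics are flows of fixed fields satisfying $\mathcal{L}_{S_\theta f_0}\theta=0$ and hence remain in $\mathcal{D}_q(M)$, and right translation by quantomorphisms transports this to arbitrary base points, with polarization finishing the vanishing of the second fundamental form. Your route is stronger and arguably cleaner: it yields $\nabla_uu=0$ exactly (not merely its normal component), avoids integration by parts, and recovers Smolentsev's picture of the induced metric as bi-invariant with one-parameter subgroups as geodesics. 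You also attend to what the theorem asserts but the paper's proof silently omits: closedness (as the preimage of the constant function $1$ under the conformal-factor map) and the embedded-submanifold structure via the $S^1$-averaging splitting $C^\infty(M)=\ker E\oplus\operatorname{im}E$, which is where regularity of $\theta$ genuinely enters; your sketch there is at the level of rigor customary in this literature (a full construction in the Sobolev setting is in \cite{EP}). The only caveat worth recording in a final write-up is that the implication ``geodesics with tangent initial data stay in the subgroup $\Rightarrow$ totally geodesic'' presupposes the submanifold structure and uniqueness of geodesics in the ambient group, which your explicit stationary solutions make unproblematic at the level of generality the paper works in.
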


\begin{proof}
In order to show that a submanifold is totally geodesic, it is equivalent to show that the second fundamental form vanishes identically. To do so, it suffices to show that $\langle \nabla_uu,v\rangle=0$ whenever $u$ is tangent and $v$ is orthogonal to the submanifold. For a right-invariant metric on a Lie group, we have that $\nabla_uu=\ad^*_uu$. Thus we would like to show that \[\langle u,\ad_uv\rangle=0\]
whenever $u\in T_e\mathcal{D}_q(M)$ and $v\in T_e\mathcal{D}_\theta(M)$ with $v$ orthogonal to $T_e\mathcal{D}_q(M)$.

So let $u=S_\theta f\in T_e\mathcal{D}_q(M)$ and $v=S_\theta g\in T_e\mathcal{D}_\theta(M)$ orthogonal to $T_e\mathcal{D}_q(M)$.
\begin{equation}
\begin{split}
\langle\nabla_uu,v\rangle&=\langle\adstar_uu,v\rangle= \langle u,\ad_uv\rangle\\&=-\int_M (S_\theta f,S_\theta \{f,g\})_gd\mu=-\int_M f\{f,g\}d\mu\\&=-\int_M fS_\theta f(g)d\mu=\int_M g(E(f)+f\diver S_\theta f)d\mu=0
\end{split}
\end{equation}
\end{proof}

From Smolentsev\cite{S1,S2}, we can see that the quantomorphism group admits a Riemannian submersion onto the symplectomorphism group of the Boothby-Wang quotient. Let $(M,\theta,g)$ be a $K$-contact manifold with regular contact form $\theta$. The vector fields of the quantomorphism group, $T_e\mathcal{D}_q(M)$, are those $V\in T_e\mathcal{D}_\theta(M)$ such that $\mathcal{L}_V\theta=0$. Now let $N=M/S^1$ with $\omega$ the induced symplectic structure by $\pi:M\rightarrow N$ and let $D_\omega(N)$ be the group of symplectomorphisms of $N$

\[\mathcal{D}_\omega(N)=\{\eta\in \mathcal{D}(N):\eta^*\omega=\omega\}.\]

Now $T_e\mathcal{D}_\omega(N)$ consists of the vector fields $V$ such that $\mathcal{L}_V\omega=0$. We call a vector field $V$ Hamiltonian if we can associate a function $H$ such that $\omega(\cdot,V)=dH(\cdot)$. In order for this definition to be unambiguous, we require that the Hamiltonians have mean zero.

For $V\in T_e\mathcal{D}_q(M)$, we have that $[V,E]=0$ and thus $T_e\mathcal{D}_q(M)\rightarrow T_e\mathcal{D}_\omega(N)$ is a projection. We can see that elements of $T_e\mathcal{D}_q(M)$ are of the form $V=hE+X$. These vector fields project onto $T_e\mathcal{D}_\omega(N)$ by $d\pi\circ X=Y\circ \pi$ with $Y\in T_e\mathcal{D}_\omega(N)$. Here we have that $\mathcal{L}_V\theta$ implies that $E(h)=0$ so that $h$ is constant in the Reeb direction. Now combined with the fact that we require our stream functions and Hamiltonians to have mean zero, we can see that the map
\[d\pi:\ker(d\pi)^\perp\rightarrow TD_\omega(N)\]
is an isometry by scaling the one of the volume forms by a constant. Thus the projection of $\mathcal{D}_q(M)$ onto $\mathcal{D}_\omega(N)$ is a Riemannian submersion.

\end{document}